\newcommand{\Co}{\mathcal{C}}
\newcommand{\R}{\mathbb{R}}
\newcommand{\ZZ}{\mathbb{Z}}
\newcommand{\KK}{\mathbb{K}}
\newcommand{\NN}{\mathbb{N}}
\newcommand{\F}{\mathbb{F}}
\newcommand{\Gr}{\text{Gr}}
\newcommand{\bx}{\mathbf{x}}
\newcommand{\lt}{\text{lt}}
\newcommand{\supp}{\text{supp}}
\newcommand{\wt}{\text{wt}}
\newcommand{\rank}{\text{rk}}
\newtheoremstyle{theorem}
  {10pt}		  
  {10pt}  
  {\sl}  
  {\parindent}     
  {\bf}  
  {. }    
  { }    
  {}     
\theoremstyle{theorem}
\newtheorem{theorem}{Theorem}
\newtheorem{corollary}[theorem]{Corollary}
\newtheoremstyle{defi}
  {10pt}		  
  {10pt}  
  {\rm}  
  {\parindent}     
  {\bf}  
  {. }    
  { }    
  {}     
\theoremstyle{defi}
\newcounter{exampleNo}
\newtheorem{proposition}[theorem]{Proposition}
\newenvironment{example}[1][Example \arabic{exampleNo}.]{\begin{trivlist}\refstepcounter{exampleNo}
\item[\hskip \labelsep {\bfseries #1}]}{\end{trivlist}}
\title{Universal Gröbner Bases for Binary Linear Codes}
\author{Natalia Dück$^1$ and Karl-Heinz Zimmermann$^2$\\
{\small $^{1,2}$ Hamburg University of Technology}\\ {\small 21073 Hamburg, GERMANY}\\[2pt]}
\date{}
\begin{document}

\maketitle
\begin{abstract}
Each linear code can be described by a code ideal given as the sum of a toric ideal and a non-prime ideal. 
In this way, several concepts from the theory of toric ideals can be translated into the setting of code ideals.
It will be shown that after adjusting some of these concepts, 
the same inclusion relationship between the set of circuits, the universal Gröbner basis and the Graver basis holds. 
Furthermore, in the case of binary linear codes,  the universal Gröbner basis will consist of all binomials 
which correspond to codewords that satisfy the Singleton bound and a particular rank condition.
This will give rise to a new class of binary linear codes denoted as Singleton codes.
\end{abstract}

{\bf AMS Subject Classification:} 13P10, 94B05\\
{\bf Key Words and Phrases:} Linear code, Gröbner basis, universal Gröbner basis, Graver basis, 
circuit, toric ideal, Singleton code


\section{Introduction}
When digital data are transmitted through a noisy channel errors can occur. 
But as receiving correct data is indispensable in many applications, error-correcting codes, 
which allow the detection and correction of a certain amount of errors by adding redundancy, 
are employed to tackle this problem. 
The construction of such codes and the study of their key properties is an ongoing and important task.

Gröbner bases, on the other hand, are a powerful tool that has originated from commutative algebra 
and provide a uniform approach to grasping a wide range of problems such as solving algebraic systems of equations, 
ideal membership decision, and effective computation in residue class rings modulo polynomial ideals~\cite{adams,becker,cls,sturmfels}.

The first connection between linear codes and Gröbner bases was established in~\cite{cooper} which soon became known as the ''Cooper philosophy``.
This link was based on the description of cyclic codes as ideals in a certain polynomial ring, where entries of a codeword
are viewed as coefficients of a polynomial.

In~\cite{borges}, a different connection between linear codes and ideals in polynomial rings was presented, 
which was followed up in \cite{mahkhz4, mahkhz3}.
In this approach, linear codes are described by a binomial ideal in a polynomial ring over an arbitrary field that can be written
as the sum of a toric ideal and a non-prime ideal, the so-called code ideal. 
Toric ideals play a central role in this framework and are a well-studied special class of ideals in polynomial rings arising in various 
applications~\cite{robbiano,contitrav-intprogramm,fulton, sturmfels}.
This correspondence with linear codes proved to be extremely beneficial as it allowed the application of (slightly modified) results from the rich 
theory of toric ideals~\cite{mahkhz3}. 
Furthermore, it has been shown that it allows an easy construction of the reduced Gröbner basis with respect to the lexicographic order
from a systematic generator matrix~\cite{mahkhz4}. 
Many results in algebraic geometry, however, depend on the chosen monomial order and thus
knowledge of the universal Gröbner basis for this ideal, i.e., a finite generating set of the ideal that is a Gröbner
basis for all monomial orders, is advantageous~\cite{weis}.

In this paper, some concepts used in connection with toric ideals are adapted to code ideals.
It will be shown that for any code ideal the same inclusion relationship between the set of circuits, 
the universal Gröbner basis and the Graver basis holds as for toric ideals. 
In particular, for binary codes, all three sets will coincide. 
Furthermore, it will be proved that the universal Gröbner basis for a binary linear code has a neat structure: it
consists of all binomials associated to codewords which satisfy the Singleton bound and a particular rank condition.
This gives rise to a new class of codes denoted as Singleton codes.

This paper is organized as follows. The next section presents the basics about Gröbner bases and introduces 
toric ideals. In the third section, the main notions of linear codes are facilitated and their connection 
to ideals is established. The main results are contained in the fourth section.


\section{Gröbner Bases and Toric Ideals}
Throughout the paper, let $\KK$ denote a field, $\ZZ$ the set of integers, and $\NN_0$ the set of non-negative integers.
Denote by $\KK[\bx]\!=\!\KK[x_1,\dots,x_n]$ the polynomial ring in $n$ indeterminates $x_1,\ldots,x_n$ and
by $\bx^{u} = x_1^{u_1}x_2^{u_2}\cdots x_n^{u_n}$ 
the \textit{monomials\/} in $\KK[\bx]$, where $u=(u_1,\ldots,u_n)\in\NN_0^n$.
The {\em total degree\/} of a monomial $\bx^{u}$ in $\KK[\bx]$ is given by the sum of exponents $|u|=u_1+\ldots+u_n$.
For any $\omega\in\R^n$, the $\omega$-degree of $\bx^u$ is defined by the usual inner product $u\cdot\omega$ in $\R^n$.

A \textit{monomial order\/} on $\KK[\bx]$ is a relation $\succ$ on the set of monomials in $\KK[\bx]$ satisfying:
(1) $\succ$ is a total ordering, (2) the monomial $\bx^\mathbf{0}=1$ is the unique minimal element, and
(3) $\bx^u\succ\bx^v$ implies $\bx^u\bx^w\succ\bx^v\bx^w$ for all $u,v,w\in\NN_0^n$.

Prominent monomial orders are the lexicographic order, the degree lexicographic order, and the degree reverse lexicographic order.

Given a monomial order $\succ$, each non-zero polynomial $f\in\KK[\bx]$ has a unique \textit{leading term}, denoted by $\lt_\succ(f)$ or simply $\lt(f)$, 
which is given by the largest involved term. 
The coefficient and the monomial of the leading term are called 
the \textit{leading coefficient\/} and the \textit{leading monomial}, respectively.

If $I$ is an ideal in $\KK[\bx]$ and $\succ$ is a monomial order on $\KK[\bx]$, 
its \textit{leading ideal\/} is the monomial ideal generated by the leading monomials of its elements,
\begin{align}
\langle \lt(I)\rangle = \langle\lt(f)\mid f\in I\rangle.
\end{align}
A finite subset $\mathcal{G}$ of an ideal $I$ in $\KK[\bx]$ is a {\em Gröbner basis\/} for $I$ with respect to $\succ$ 
if the leading ideal of $I$ is generated by the set of leading monomials in~$\mathcal{G}$; 
that is, 
\begin{align}
\langle\lt(I)\rangle = \langle\lt(g)\mid g\in \mathcal{G}\rangle.
\end{align}
A  Gröbner basis $\mathcal{G}$ is {\em minimal} if no monomial in the set $\mathcal{G}$ is redundant, 
and it is {\em reduced\/} if for any two distinct elements $g,h\in \mathcal{G}$, no term of $h$ is divisible by $\lt(g)$
and all its elements have leading coefficient equal to~1.
A reduced Gröbner basis for an ideal $I$ with respect to a monomial order~$\succ$ is uniquely determined
and will be denoted by $\mathcal{G}_\succ(I)$.

Gröbner bases solve the ideal membership problem. More concretely, a polynomial $f$ belongs to an ideal $I$ if and
only if it is being reduced to zero on division by a Gröbner basis for $I$. 

Gröbner bases can by computed by \textit{Buchberger's algorithm} which is implemented in most computer algebra systems.

Although infinitely many monomial orders exist, there are only finitely many reduced Gröbner bases for a given ideal. 
The union of all reduced Gröbner bases is called the \textit{universal Gröbner basis} for $I$.
More details on Gröbner bases can be found in~\cite{adams,becker,cls,morarobFan}.
\medskip

Toric ideals form a specific class of ideals which can be defined in different ways~\cite{robbiano}. 
For a subset $A\subset\ZZ^d$ of $n$ vectors or equivalently a matrix $A\in\ZZ^{d\times n}$,
the \textit{toric ideal} $I_A$ is defined as
\begin{align}
I_A=\left\langle \bx^u-\bx^v\mid Au=Av,\,u,v\in\NN_0^n\right\rangle.\label{eq-toricrep1}
\end{align}
Each element $u\in\ZZ^n$ can be uniquely written as $u=u^+-u^-$ where $u^+,u^-$ have disjoint support and their entries are
non-negative. Based on this, the toric ideal $I_A$ can also be expressed as~\cite{sturmfels}
\begin{align}
I_A=\left\langle \bx^{u^+}-\bx^{u^-}\mid u\in\ker(A)\right\rangle.\label{eq-toricrep2}
\end{align}
The binomials in the generating set~(\ref{eq-toricrep2}) are \textit{pure}, i.e., the greatest common divisor of 
the terms $\bx^{u^+}$ and $\bx^{u^-}$
in the binomial $\bx^{u^+}-\bx^{u^-}$ is $1$.


\section{Linear Codes over Prime Fields}
Let $\F$ be a finite field and let $n$ and $k$ be positive integers with $n\geq k$. 
A \textit{linear code\/} of length $n$ and dimension $k$ over $\F$ is the image $\Co$ of a one-to-one linear mapping 
$\phi:\F^k\rightarrow\F^n$, i.e.,  $\Co = \{\phi(a)\mid a\in\F^k\}$. Such a code is denoted as {\em $[n,k]$ code\/} and its elements are called \textit{codewords}.
In algebraic coding, the codewords are always written as row vectors.
Alternatively, a code $\Co$ can be described as 
the row space of a matrix $G\in\F^{k\times n}$, whose rows form a basis of $\Co$, and the matrix $G$ is then called a 
\textit{generator matrix\/} for $\Co$. 
Any other generator matrix for $\Co$ can be obtained from a given generator matrix for $\Co$ by multiplying it from the left with a regular matrix.
A code $\Co$ is in \textit{standard form} if it has a generator matrix which is {\em systematic}, 
i.e., $G=\left(I_k\mid M\right)$, where $I_k$ is the $k\times k$ identity matrix. 
Note that a generator matrix for an $[n,k]$ code can contain a zero column.
Such a code can be shortened by deleting this column giving an $[n-1,k]$ code. 
All subsequently considered codes are assumed to have no such zero column.

Two $[n,k]$ codes are {\em equivalent\/} if one can be obtained from the other by a monomial transformation, 
i.e., a linear map given by a monomial matrix, which is a matrix that has in each row and column exactly one non-zero element.
It follows that every linear code is equivalent to a linear code in standard form.  

The dual code $\Co^\perp$ of an $[n,k]$ code~$\Co$ over $\F$ is an $[n,n-k]$ code consisting of all words $u\in\F^n$ 
such that $u\cdot c=uc^T=0$ for each $c\in\Co$,
where $c^T$ denotes the transposed of $c$.
If $G = \left(I_k\mid M\right)$ is a generator matrix for $\mathcal{C}$, then $H = \left(-M^T\mid I_{n-k}\right)$ is a generator matrix 
for $\mathcal{C}^\perp$. For each word $c\in\F^n$, we have $c\in \mathcal{C}$ if and only if $ Hc^T = \mathbf{0}$.
The matrix $H$ is a \textit{parity check matrix\/} for $\mathcal{C}$.

The \textit{support\/} of a vector $u\in\F^n$, denoted by $\supp(u)$, is the subset of $\underline n=\{1,\ldots,n\}$ given by all indices $i\in\underline{n}$ with $u_i\neq 0$, 
and the \textit{Hamming weight\/}, denoted by $\wt(u)$, is the number of non-zero components and so equals the cardinality of the codeword's support. 
Note that for a binary code, each codeword is completely determined by its support.
The \textit{weight distribution\/} of an $[n,k]$ code $\Co$ is a finite sequence of integers $A_0,A_1,\dots,A_n$, 
where $A_i$, $0\leq i\leq n$, denotes the number of codewords in $\Co$ having Hamming weight~$i$.
The \textit{Hamming distance\/} between two vectors $u,v\in\F^n$ is the number of positions in which they differ and so is given by the Hamming weight $\wt(u-v)$ of the difference vector.
The Hamming distance defines a metric on $\F^n$.
The minimum Hamming distance between any to distinct codewords in $\Co$ is the {\em minimum distance\/} of $\Co$.

For any matrix $G\in\F^{k\times n}$ and any subset $J\subseteq\underline{n}$ of indices, 
let $G_J$ denote the $k\times |J|$ submatrix of $G$ consisting of the columns with indices in $J$.
Similarly, let $c_J$ be the vector of length $|J|$ consisting of the coordinates of $c$ with indices in $J$.
A subset $J\subseteq\underline{n}$ of cardinality $k$ is called an \textit{information set} of the code if the $k\times k$ submatrix $G_J$ has rank $k$.
In particular, the following are equivalent:
\begin{enumerate}
 \item The set of indices $J$ is an information set.
 \item For each $m\in\F^k$ there is a unique $c\in\Co$ with $c_J=m$.
  \item For every generator matrix $G$ of the code $\Co$, $G_J$ has rank $k$.
\end{enumerate}
By the second assertion, a code cannot contain an information set $J\subseteq \underline{n}\setminus\supp(c)$ at the zero positions of a non-zero codeword $c$.

More basics on linear codes can be found in~\cite{macws, vlint}.

For a given $[n,k]$ code $\Co$ over a field $\F_p$ with $p$ elements, define the associated \textit{code ideal\/} as
\begin{align} \label{eq-codeideal}
I_\Co=\left\langle\bx^c-\bx^{c'}\left|\right.c-c'\in\Co\right\rangle+ I_p,
\end{align}
where
\begin{align}
I_p=\left\langle x_i^p-1\left|\right.1\leq i\leq n\right\rangle.
\end{align}
Note that $I_p$ allows to view the exponents of the monomials as vectors in~$\F_p^n$. 
Each codeword $c-c'\in\Co$ with $\supp(c)\cap\supp(c')=\emptyset$
can be associated with the binomial $\bx^{c}-\bx^{c'}$ in the code ideal $I_\Co$, 
where it is always assumed that $\bx^{c}\succ\bx^{c'}$ whenever a monomial order $\succ$ is considered. 
Note that unlike for toric ideals this binomial representation is not unique.
Nevertheless, a binomial $\bx^{c}-\bx^{c'}$ in~$I_\Co$ is said to be associated with the codeword $c-c'$.
Observe that 
the code ideal of a code $\Co$ can be based on a toric ideal as follows,
\begin{align}
I_\Co=I_A+I_p,
\end{align}
where $A$ in an integral $n-k\times n$ matrix such that $H=A\otimes_\ZZ \F_p$ is a parity check matrix for $\Co$.

\section{Universal Gröbner Bases}

In~\cite{mahkhz4} it has be shown that the reduced Gröbner basis for the code ideal~$I_\Co$ with respect to
the lexicographic order can be read off from a generator matrix for the corresponding code~$\Co$.
\begin{theorem}[\cite{mahkhz4}]\label{thm-mainmeh}
Let $\Co$ be an $[n,k]$ code over a prime field\/ $\F_p$ generated by a matrix $G=\left(I_k\mid \ast\right)$ with row vectors 
$g_i=e_i+m_i$, $1\leq i\leq k$, where $e_i$ denotes the $i$th unit vector and $m_i$ a row vector of length $n$
whose first $k$ entries are zero.
The reduced Gröbner basis with respect to any lexicographic order with 
$\{x_1,\dots,x_k\}\succ\{x_{k+1},\dots,x_n\}$ for the code ideal $I_\Co$ is given by
\begin{align*}
\mathcal{G}_\succ\left(I_\Co\right)=\left\{x_i-\bx^{m_i}\mid 1\leq i\leq k\right\}\cup\left\{x_i^p-1\mid k+1\leq i\leq n\right\}.
\end{align*}
\end{theorem}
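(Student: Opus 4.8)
The plan is to show that the given set $\mathcal{G}_\succ(I_\Co)$ is contained in $I_\Co$, generates $I_\Co$, and is a reduced Gröbner basis with respect to the chosen lexicographic order. First I would verify membership: each $x_i - \bx^{m_i}$ for $1 \le i \le k$ corresponds to the codeword $g_i = e_i + m_i$ (after adjusting signs so that the binomial is pure, and noting the first $k$ entries of $m_i$ vanish so $\supp(e_i) \cap \supp(m_i) = \emptyset$), hence lies in the ideal $\langle \bx^c - \bx^{c'} \mid c - c' \in \Co\rangle$; and each $x_i^p - 1$ for $k+1 \le i \le n$ lies in $I_p$ by definition. So $\mathcal{G}_\succ(I_\Co) \subseteq I_\Co$.

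Next I would show the reverse inclusion, that these elements generate $I_\Co$. The ideal $I_p$ is clearly generated once we adjoin $x_i^p - 1$ for $i > k$ together with the relations obtainable from the $g_i$: indeed, using $x_i - \bx^{m_i}$ we can rewrite $x_i^p - 1$ for $i \le k$ as $\bx^{p m_i} - 1$, which reduces to $0$ modulo the $x_j^p - 1$ with $j > k$ since $m_i$ is supported on $\{k+1,\dots,n\}$. For a general binomial generator $\bx^c - \bx^{c'}$ with $c - c' \in \Co$, write $c - c' = \sum_{i=1}^k a_i g_i$ with $a_i \in \F_p$; then modulo the binomials $x_i - \bx^{m_i}$ and the ideal $I_p$ one reduces $\bx^c$ and $\bx^{c'}$ to monomials in $x_{k+1},\dots,x_n$ only, and these two monomials must then be congruent modulo $I_p$, hence equal as elements of $\F_p^{\,n-k}$-exponents, so the difference lies in the claimed ideal. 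This is essentially a telescoping/substitution argument and is the routine part.

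The substantive step is proving the Gröbner basis property. Under any lexicographic order with $\{x_1,\dots,x_k\} \succ \{x_{k+1},\dots,x_n\}$, the leading terms are $\lt(x_i - \bx^{m_i}) = x_i$ for $1 \le i \le k$ (since $\bx^{m_i}$ involves only the smaller variables) and $\lt(x_i^p - 1) = x_i^p$ for $k+1 \le i \le n$. The cleanest route is Buchberger's criterion: I would check that every S-polynomial of a pair from $\mathcal{G}_\succ(I_\Co)$ reduces to zero. The leading monomials $x_1,\dots,x_k,x_{k+1}^p,\dots,x_n^p$ are pairwise coprime, so by Buchberger's first criterion (coprime leading terms) all S-polynomials reduce to zero automatically — provided one first checks that $\mathcal{G}_\succ(I_\Co)$ really is a generating set, which is exactly what the previous paragraph establishes. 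Hence $\mathcal{G}_\succ(I_\Co)$ is a Gröbner basis. Alternatively, and perhaps more in the spirit of coding theory, one can argue directly: the quotient $\KK[\bx]/\langle \lt(g) \mid g \in \mathcal{G}_\succ(I_\Co)\rangle$ has $\KK$-dimension $p^{\,n-k}$ (monomials $x_{k+1}^{b_{k+1}}\cdots x_n^{b_n}$ with $0 \le b_j < p$), which matches $\dim_\KK \KK[\bx]/I_\Co = |\F_p^n / \Co| = p^{\,n-k}$; combined with the inclusion $\langle \lt(g) \mid g \in \mathcal{G}_\succ(I_\Co)\rangle \subseteq \langle \lt(I_\Co)\rangle$, equality of dimensions forces equality of the leading ideals.

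Finally I would observe that the basis is reduced: the leading coefficients are all $1$; no $\lt(g) = x_i$ ($i \le k$) divides any term of another element, since the remaining elements involve only $x_{k+1},\dots,x_n$ in their terms (both the tails $\bx^{m_i}$ and the binomials $x_j^p - 1$); and $x_i^p$ ($i > k$) divides no term of $x_i - \bx^{m_i}$ nor the constant term $1$ nor any $x_j^p$ with $j \ne i$. The main obstacle is simply making the generating-set argument airtight, since the binomial representation of a codeword in $I_\Co$ is not unique and one must be careful that the reduction modulo the proposed basis is well-defined modulo $I_p$; once that is in hand, the coprimality of leading terms makes the Gröbner property immediate.
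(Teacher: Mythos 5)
The paper itself contains no proof of this theorem (it is quoted from~\cite{mahkhz4}), so your argument has to stand on its own. Its architecture --- membership, generation by rewriting every monomial to a normal form in $x_{k+1},\dots,x_n$, the coprime-leading-term (or dimension-count) argument for the Gröbner property, then the reducedness check --- is the right one, and the coprimality and reducedness parts are fine. The gap is at the one step that carries all the content: after reducing $\bx^c$ and $\bx^{c'}$ to monomials in $x_{k+1},\dots,x_n$ with exponents below $p$, you assert that the two results ``must then be congruent modulo $I_p$, hence equal.'' Nothing you have said forces this. The correct reason is not $I_p$ but the code together with the systematic form of $G$: each rewriting step changes the exponent vector, modulo $p$, by an element of $\Co$, so the difference of the two normal-form exponents is a codeword supported on $\{k+1,\dots,n\}$; since the first $k$ coordinates of $xG$ are $x$ itself, the only such codeword is $\mathbf{0}$, and hence the two normal forms coincide. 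You never invoke this injectivity/systematic-form fact, and without it the telescoping argument proves nothing. The same omission resurfaces in your alternative route: the equality $\dim_\KK \KK[\bx]/I_\Co = p^{\,n-k}$ is precisely the nontrivial point (the lower bound comes from the surjection of $\KK[\bx]/I_\Co$ onto the group algebra of $\F_p^n/\Co$, under which every generator of $I_\Co$ maps to zero); if you prove that, the dimension count by itself finishes the theorem and you do not even need the separate generation argument, but quoting it as ``$=|\F_p^n/\Co|$'' without proof leaves the same hole in a different place.

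A secondary caveat concerns signs. The binomial $x_i-\bx^{m_i}$ has exponent difference $e_i-m_i$, while the given row vector is the codeword $g_i=e_i+m_i$; with the code ideal as defined in this paper these agree only for $p=2$. For odd $p$, membership of $x_i-\bx^{m_i}$ in $I_\Co$ would require $e_i-m_i\in\Co$, which fails in general: for $p=3$ and $G=\begin{pmatrix}1&2\end{pmatrix}$ one has $x_1-x_2\in I_\Co$ but $x_1-x_2^2\notin I_\Co$. Your phrase ``after adjusting signs so that the binomial is pure'' hides exactly this issue --- the supports of $e_i$ and $m_i$ are already disjoint, and the needed adjustment replaces $\bx^{m_i}$ by $\bx^{(-m_i)\bmod p}$, i.e.\ it changes the binomial. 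You should either state and use the sign convention under which the theorem is literally true (tails $(-m_i)\bmod p$, equivalently a generator matrix $(I_k\mid -M)$), or restrict the verification to the binary case, which is all that the rest of the paper uses.
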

This result can be further generalized.
\begin{corollary}\label{cor-generalized}
Let $\Co$ be an $[n,k]$ code over a prime field $\F_p$ with an information set $J=\{i_1,\dots,i_k\}\subseteq\underline{n}$. 
There exists a generator matrix $G$ in reduced row echelon form with respect to the columns indexed by $J$ and row vectors
$g_{i_j}=e_{i_j}+m_j$, where $1\leq j\leq k$, and
the reduced Gröbner basis with respect to any lexicographic order with 
$\{x_j\mid j\in J\}\succ\{x_\ell,\mid \ell\in \underline{n}\setminus J\}$ for the code ideal $I_\Co$ is given by
\begin{align*}
\mathcal{G}_\succ\left(I_\Co\right)=\left\{x_{i_j}-\bx^{m_j}\mid i_j\in J\right\}\cup\left\{x_\ell^p-1\mid \ell\in \underline{n}\setminus J\right\}.
\end{align*}
\end{corollary}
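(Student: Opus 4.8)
The plan is to reduce Corollary~\ref{cor-generalized} to Theorem~\ref{thm-mainmeh} by a permutation of the coordinates that carries the information set $J$ onto $\{1,\dots,k\}$. I would dispose of the existence statement first: since $J$ is an information set, the $k\times k$ submatrix $G_J$ of a generator matrix $G$ is invertible, so replacing $G$ by $G_J^{-1}G$ and labelling the rows so that the row called $g_{i_j}$ has $J$-part $e_{i_j}$ produces a generator matrix with $G_J=I_k$; equivalently $g_{i_j}=e_{i_j}+m_j$ with $\supp(m_j)\subseteq\underline{n}\setminus J$. This is the asserted matrix in reduced row echelon form with respect to the columns indexed by $J$.

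Now fix the permutation $\sigma$ of $\underline{n}$ with $\sigma(i_j)=j$ for $1\le j\le k$; it maps $J$ onto $\{1,\dots,k\}$ and $\underline{n}\setminus J$ onto $\{k+1,\dots,n\}$. A coordinate permutation is a monomial transformation, so the permuted code $\Co'=\{\sigma(c)\mid c\in\Co\}$ is equivalent to $\Co$ and, after permuting the columns of $G$ through $\sigma$, is generated by the systematic matrix $G'=(I_k\mid M')$ whose rows are $g'_j=e_j+m'_j$ with $m'_j=\sigma(m_j)$; the first $k$ entries of $m'_j$ vanish. Let $\Phi$ be the $\KK$-algebra automorphism of $\KK[\bx]$ with $\Phi(x_i)=x_{\sigma(i)}$, so that $\Phi(\bx^u)=\bx^{\sigma\cdot u}$, where $(\sigma\cdot u)_j=u_{\sigma^{-1}(j)}$. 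Evaluating $\Phi$ on the binomials $\bx^c-\bx^{c'}$ with $c-c'\in\Co$ and on the generators $x_i^p-1$ of $I_p$ shows immediately that $\Phi(I_\Co)=I_{\Co'}$; the non-uniqueness of the binomial representation of a codeword is immaterial here, since $\Phi$ is defined on the whole ring.

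The third step is to transport the monomial order. Given a lexicographic order $\succ$ with $\{x_j\mid j\in J\}\succ\{x_\ell\mid\ell\in\underline{n}\setminus J\}$, define $\succ'$ on $\KK[\bx]$ by declaring $\bx^a\succ'\bx^b$ if and only if $\bx^{\sigma^{-1}\cdot a}\succ\bx^{\sigma^{-1}\cdot b}$. One verifies that $\succ'$ is again a monomial order, that it is lexicographic — with the variable priorities of $\succ$ relabelled by $\sigma$ — and that it satisfies $\{x_1,\dots,x_k\}\succ'\{x_{k+1},\dots,x_n\}$. By construction $\lt_{\succ'}(\Phi(f))=\Phi(\lt_\succ(f))$ for all $f\in\KK[\bx]$; together with the facts that $\Phi$ fixes leading coefficients and is a divisibility-preserving bijection on monomials, this gives: $\mathcal{G}$ is the reduced Gröbner basis of an ideal $I$ with respect to $\succ$ if and only if $\Phi(\mathcal{G})$ is the reduced Gröbner basis of $\Phi(I)$ with respect to $\succ'$.

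Finally, Theorem~\ref{thm-mainmeh} applied to $\Co'$ and $\succ'$ gives $\mathcal{G}_{\succ'}(I_{\Co'})=\{x_j-\bx^{m'_j}\mid 1\le j\le k\}\cup\{x_j^p-1\mid k+1\le j\le n\}$. Applying $\Phi^{-1}$, which sends $x_i\mapsto x_{\sigma^{-1}(i)}$, uses $\sigma^{-1}(j)=i_j$ and $\Phi^{-1}(\bx^{m'_j})=\bx^{m_j}$, and pulls $\{k+1,\dots,n\}$ back to $\underline{n}\setminus J$, one obtains $\mathcal{G}_\succ(I_\Co)=\Phi^{-1}(\mathcal{G}_{\succ'}(I_{\Co'}))=\{x_{i_j}-\bx^{m_j}\mid 1\le j\le k\}\cup\{x_\ell^p-1\mid\ell\in\underline{n}\setminus J\}$, which is the claim. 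The one place that needs care is the consistent bookkeeping of $\sigma$ versus $\sigma^{-1}$ — on exponent vectors, on the induced variable priorities of the lexicographic order, and on the two generating sets — while the individual verifications (that $\Phi$ is well defined, respects the code ideals, and intertwines the orders) are entirely routine once the conventions are fixed.
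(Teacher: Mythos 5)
Your proof is correct, and it is exactly the argument the paper intends: the paper states Corollary~\ref{cor-generalized} without proof, treating it as an immediate generalization of Theorem~\ref{thm-mainmeh} obtained by relabelling coordinates so that the information set $J$ becomes $\{1,\dots,k\}$. Your write-up simply makes this explicit — the row reduction giving $G_J=I_k$, the permutation automorphism $\Phi$ with $\Phi(I_\Co)=I_{\Co'}$, the transported lexicographic order, and the fact that such an automorphism carries reduced Gröbner bases to reduced Gröbner bases — and your bookkeeping of $\sigma$ versus $\sigma^{-1}$ is consistent throughout.
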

In~\cite{sturmfels} the author has introduced several concepts in the context of toric ideals which will be utilized in the following. 
However, since code ideals are not toric but a sum of a toric and a non-prime ideal, 
several of these concepts need to be adapted. 
In particular, it will become apparent that binary and non-binary codes need to be distinguished.

A binomial~$\bx^{c}-\bx^{c'}$ in $I_\Co$ is called \textit{primitive} if there is no other binomial $\bx^{u}-\bx^{u'}$ in $I_\Co$
such that $\bx^{u}$ divides $\bx^{c}$ and $\bx^{u'}$ divides $\bx^{c'}$. 
Additionally, every binomial $x_i^p-1$ for $1\leq i\leq n$ is considered to be primitive. For binary codes, the case
of $c'=\mathbf{0}$ is excluded for reasons which will later become apparent. 
Note that if a monomial order $\prec$ is given, $c=\mathbf{0}$ implies $\bx^{c}-\bx^{c'}=0$ because for each binomial it is assumed that $\bx^{c}\succ\bx^{c'}$. 
The \textit{Graver basis} for $\Co$ consists of all primitive binomials lying in the corresponding code ideal and is denoted by~$\Gr_\Co$.

For a binary code $\Co$, a codeword $c$ in $\Co$ is a \textit{circuit} if its support is minimal with respect to inclusion.
Each binomial associated with such a codeword is also called a circuit. 
It follows that every binomal which is a circuit is also primitive.

In~\cite{sturmfels} circuits were defined by the additional condition that their entries are relatively prime. 
In the binary case this condition can be omitted. 
The extension to codes over an arbitrary prime field $\F_p$, however, cannot simply be accomplished by adding this condition. 
A different definition is required in order to obtain similar results
for the code ideal over such a field (Prop.~\ref{prop-inclusionp})  
since the exponents of the polynomials are treated as vectors in $\F_p^n$. The arising difficulties
in this case are illustrated as follows.

\begin{example}\label{ex-problemFp}
Consider a linear code~$\Co$ over $\F_7$ generated by $$G=\begin{pmatrix}0&1&1\\1&3&0\end{pmatrix}$$
and the code ideal $I_\Co$ in ${\Bbb Q}[a,b,c]$.
The codeword $(1,3,0)$ and all its multiples $(2,6,0)$, $(3,2,0)$, $(4,5,0)$, $(5,1,0)$ and $(6,4,0)$
have minimal support with respect to inclusion and except for $(2,6,0)$ and $(6,4,0)$ their entries are relatively prime. 
However, for the codeword $(3,2,0) = (3,0,0) - (0,5,0)$, the corresponding binomial $a^3-b^5$ is not primitive 
since $a^2-b$ lies in the code ideal, $a^2$ divides $a^3$, and $b$ divides $b^5$. 
It follows that minimal support and relative primeness of entries are not sufficient 
to ensure that such a binomial is primitive. 

Another drawback is that different representations of a codeword yield binomials with distinct attributes. 
Writing $(2,6,0)\!=\!(2,0,0)-(0,1,0)$ yields the primitive binomial $a^2-b$. 
However, expanding $(2,6,0)=(0,6,0)-(5,0,0)$ gives the binomial $b^6-a^5$, 
which is not primitive because the binomial $b^2-a^4$ corresponding to $(3,2,0)=(0,2,0)-(4,0,0)$ also belongs to the code ideal. 
\hfill $\diamondsuit$
\end{example}

Note that for toric ideals, when the exponents are viewed as vectors in $\ZZ^n,$
the condition of relative primeness of the entries together with the minimality of the support is sufficient
to guarantee that the corresponding binomial is primitive since for any $c\in\ZZ^n$ the representation $c=c^+-c^-$ is unique.

Motivated by the above example, a more general definition of circuits for codes over arbitrary prime fields is required:
A binomial $\bx^{c}-\bx^{c'}$ in $I_\Co$ with $c'\neq\mathbf{0}$ is called a \textit{circuit} if it is a primitive binomial and its support is minimial with respect to
inclusion. 
In the non-binary case the attribute of being a circuit is tied to the binomial associated with a codeword rather than
the codeword itself because a codeword may be associated with both, a primitive and a non-primitive binomial as the above example 
has demonstrated.    
With this more general definition, denote by $\text{C}_\Co$ all circuits lying in the code ideal $I_\Co$.
Finally, denote the universal Gröbner basis for the code ideal $I_\Co$ by $\mathcal{U}_\Co$.

\begin{proposition}\label{prop-inclusionp}
For a linear code $\Co$ over $\F_p$, $\text{C}_\Co\subseteq \mathcal{U}_\Co\subseteq\Gr_\Co$.
\end{proposition}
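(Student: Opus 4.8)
The plan is to establish the two inclusions $\text{C}_\Co\subseteq\mathcal{U}_\Co$ and $\mathcal{U}_\Co\subseteq\Gr_\Co$ separately, mimicking the classical argument of Sturmfels for toric ideals but tracking carefully the extra generators $x_i^p-1$ and the non-uniqueness of the binomial representation of a codeword. Throughout I would use the toric description $I_\Co=I_A+I_p$ together with the characterisation of Gröbner bases via reduction.

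\emph{Second inclusion, $\mathcal{U}_\Co\subseteq\Gr_\Co$.} First I would recall that every element of the universal Gröbner basis lies in some reduced Gröbner basis $\mathcal{G}_\succ(I_\Co)$, so it suffices to show each binomial in a reduced Gröbner basis is primitive. Suppose $\bx^c-\bx^{c'}\in\mathcal{G}_\succ(I_\Co)$ (with $\bx^c\succ\bx^{c'}$; the generators $x_i^p-1$ are primitive by fiat, so assume the binomial is associated to a nonzero codeword, and in the binary case $c'\neq\mathbf 0$). If it were not primitive, there would be a binomial $\bx^u-\bx^{u'}\in I_\Co$ with $\bx^u\mid\bx^c$ and $\bx^{u'}\mid\bx^{c'}$, and $\bx^u-\bx^{u'}\neq\bx^c-\bx^{c'}$. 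Since $\bx^u\mid\bx^c$ we have $\bx^u\succ\bx^{u'}$, so $\lt_\succ(\bx^u-\bx^{u'})=\bx^u$ divides $\lt_\succ(\bx^c-\bx^{c'})=\bx^c$; as the pair is distinct and $\bx^u-\bx^{u'}\in I_\Co$, this contradicts the defining property of a reduced Gröbner basis (no term of one generator is divisible by the leading term of another), unless $\bx^u=\bx^c$, in which case $\bx^{u'}=\bx^{c'}$ forces equality, again a contradiction. Hence every element of $\mathcal{U}_\Co$ is primitive, i.e.\ lies in $\Gr_\Co$.

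\emph{First inclusion, $\text{C}_\Co\subseteq\mathcal{U}_\Co$.} Let $\bx^c-\bx^{c'}$ be a circuit, so it is primitive and its support $\supp(c)\cup\supp(c')$ is minimal among supports of binomials in $I_\Co$ (with $c'\neq\mathbf 0$; the monomial generators $x_i^p-1$ lie in every reduced Gröbner basis anyway). Fix an arbitrary monomial order $\succ$ and suppose for contradiction that $\bx^c-\bx^{c'}$ is not in $\mathcal{G}_\succ(I_\Co)$; then, since $\mathcal{G}_\succ(I_\Co)$ generates $\langle\lt(I_\Co)\rangle$, there is some $g\in\mathcal{G}_\succ(I_\Co)$ whose leading monomial $\lt(g)$ properly divides $\bx^c$ (the leading monomial of our circuit). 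Writing $g=\bx^u-\bx^{u'}$ (or $g=x_i^p-1$), I would argue that $\supp(u)\subseteq\supp(c)$, and then that the ``reduction remainder'' obtained by subtracting a suitable monomial multiple of $g$ from $\bx^c-\bx^{c'}$ produces, after cancellation, a nonzero binomial in $I_\Co$ whose support is strictly contained in that of $\bx^c-\bx^{c'}$ — contradicting minimality of the support. The key technical point, and the place I expect the main obstacle, is handling the interaction with $I_p$: reducing modulo $x_i^p-1$ can lower an exponent from $p$ to $0$ (thereby shrinking the support) or wrap it around without shrinking it, and one must also deal with the possibility that $\lt(g)$ divides $\bx^{c}$ only after using the $x_i^p-1$ relations to rewrite $\bx^c$. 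I would treat this by first reducing $\bx^c-\bx^{c'}$ so that all exponents lie in $\{0,\dots,p-1\}$ (which does not change the associated codeword and, by pureness, does not enlarge the support), and then running the divisibility/reduction argument purely among the binomial generators associated to codewords, invoking primitivity to rule out the degenerate case where the remainder is zero.

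The subtle part throughout is the non-uniqueness of binomial representations: a single codeword can give rise to both primitive and non-primitive binomials (as in Example~\ref{ex-problemFp}), so one must be careful that ``support'' and ``primitive'' are read as attributes of the binomial, not the codeword, and that the reduction steps stay within a fixed choice of representative with exponents reduced mod $p$. Once the bookkeeping is set up so that every reduction step either shrinks the support or is blocked by primitivity, both inclusions follow, and the binary case (where, by the remarks preceding the proposition, circuits, the universal Gröbner basis, and the Graver basis will in fact all coincide) is the special case $p=2$.
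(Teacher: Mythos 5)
There is a genuine gap in your argument for the first inclusion $\text{C}_\Co\subseteq\mathcal{U}_\Co$. You fix an \emph{arbitrary} monomial order $\succ$ and try to derive a contradiction from the assumption that the circuit is not in $\mathcal{G}_\succ(I_\Co)$. If that worked, it would show that every circuit lies in \emph{every} reduced Gröbner basis, which is false: by Theorem~\ref{thm-mainmeh} a single reduced (lex) Gröbner basis has only $n$ elements, while already the binary Hamming code has $147$ elements in $\mathcal{U}_\Co$, all of which are circuits by Theorem~\ref{thm-binequal}. The place where your reduction argument breaks down is the claim that subtracting a monomial multiple of $g=\bx^u-\bx^{u'}$ from $\bx^c-\bx^{c'}$ yields a nonzero binomial of $I_\Co$ with \emph{strictly smaller support}: for an arbitrary order the tail $\bx^{u'}$ of $g$ can involve variables outside $\supp(c-c')$, so the remainder's support need not be contained in the circuit's support and no contradiction with support-minimality arises. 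The paper avoids this by \emph{constructing} a monomial order adapted to the given circuit: an elimination order with $\{x_i\mid i\notin\supp(c)\}\succ\{x_i\mid i\in\supp(c)\}$, refined on $\supp(c)$ by a weight vector $\omega$ (weight $\deg(c'')$ on $\supp(c')$ and $\deg(c')$ on $\supp(c'')$) with a lex tie-break. For this order, any basis element whose leading term divides $\bx^{c'}$ is forced to have its whole support inside $\supp(c)$, minimality gives that it corresponds to a scalar multiple $\alpha c$, and a degree/weight comparison (using primitivity of the circuit in the case $\alpha\neq 1$) pins it down to the circuit itself, so the circuit lies in $\mathcal{G}_\succ(I_\Co)$ for that particular order. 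This order-construction, and the $\omega$-degree case analysis needed over $\F_p$ (which is exactly why binomials $\bx^c-1$ must be excluded), is the missing idea in your proposal.

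Two smaller points on your second inclusion $\mathcal{U}_\Co\subseteq\Gr_\Co$ (which the paper simply cites from the literature, so your direct argument is a reasonable addition): the inference ``$\bx^u\mid\bx^c$ hence $\bx^u\succ\bx^{u'}$'' is not valid, and the contradiction with reducedness cannot be drawn directly from $\bx^u-\bx^{u'}\in I_\Co$, since that binomial need not belong to the basis. The repair is standard: the leading term of $\bx^u-\bx^{u'}$ (whichever of the two monomials it is) lies in $\langle\lt(I_\Co)\rangle$, hence is divisible by $\lt(g')$ for some $g'\in\mathcal{G}_\succ(I_\Co)$, and therefore $\lt(g')$ divides a term of the basis element $\bx^c-\bx^{c'}$; handling separately the case $g'=\bx^c-\bx^{c'}$ then yields the contradiction with reducedness. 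With that bookkeeping your second inclusion goes through, but the first inclusion needs to be redone along the lines of the tailored order described above.
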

\begin{proof}
The inclusion $\mathcal{U}_\Co\subseteq\Gr_\Co$ is proved in~\cite{mahkhz3}. 
So it remains to show that $\text{C}_\Co\subseteq\mathcal{U}_\Co$.

Let $\bx^{c'}-\bx^{c''}\in I_\Co$ be a circuit corresponding to the codeword $c=c'-c''\in\Co$. 
Put $s=\deg({c'})$ and $t=\deg({c''})$ and choose an elimination order $\succ$ such that 
$\{x_i\mid i\notin\supp(c)\}\succ\{x_i\mid i\in\supp(c)\}$ and the monomials in $\{x_i\mid i\in\supp(c)\}$ are first
compared by their $\omega$-degree, where $\omega_i=t$ whenever $i\in\supp(c')$ and $\omega_i=s$ whenever $i\in\supp(c'')$
and ties are broken by any lexicographic order with $\{x_i\mid i\in\supp(c')\}\succ\{x_i\mid i\in\supp(c'')\}$.
For this order, $\bx^{c'}\succ\bx^{c''}$ because of $c'\cdot\omega=st=c''\cdot\omega$ and the chosen tie breaker.

Claim that $\bx^{c'}-\bx^{c''}\in\mathcal{G}_\succ(I_\Co)$. 
Indeed, since this binomial belongs to $I_\Co$, it must 
be reduced to zero by binomials in $\mathcal{G}_\succ(I_\Co)$ and in particular, there must be a binomial
$\bx^{v'}-\bx^{v''}\in\mathcal{G}_\succ(I_\Co)$ such that $v=v'-v''\in\Co$ and its leading term $\bx^{v'}$ divides $\bx^{c'}$. 
But then $\supp(v')\subseteq\supp(c')$ and by the choice of the monomial order it follows $\supp(v'')\subseteq\supp(c)$. Hence,
$\supp(v)\subseteq\supp(c)$. But as $c$ has minimal support this inclusion cannot be proper and so $\alpha c=v$ for some $\alpha\in\F_p$. 

Two cases occur.
First, consider the case $\alpha=1$, i.e., $c=v$. Then by the choice of monomial order it can be
further deduced that $\bx^{v'}=\bx^{c'}$. Otherwise, $\bx^{c'}$ would contain more variables than $\bx^{v'}$, say 
$\bx^{c'}=\bx^{v'}\bx^{w}$ for some $w\neq\mathbf{0}$ and then these missing variables must appear in the second term, 
$\bx^{v''}=\bx^{c''}\bx^{\mathbf{p}-w}$, where $\mathbf{p}$ is the all-$p$ vector. 
But then
\begin{align*}
v'\cdot\omega&=(s-|w|)t<st+pt-|w|t=st+(p-|w|)t=v''\cdot\omega
\end{align*}
which contradicts $\bx^{v'}\succ\bx^{v''}$. 

Second, consider the case $\alpha\neq 1$. Because of $\supp(v')\subseteq\supp(c')$ either $\supp(v')\subsetneq\supp(c')$
or $\supp(v')=\supp(c')$.

First assume that $\supp(v')=\supp(c')$ and thus $\supp(v'')\!=\!\supp(c'')$. 
Since $\bx^{v'}$ divides $\bx^{c'}$ the monomial $\bx^{v''}$ cannot divide $\bx^{c''}$ for $\bx^{c'}-\bx^{c''}$ is primitive. 
But as $\supp(v'')=\supp(c'')$ 
the degree of $\bx^{v''}$ must be strictly greater than that of $\bx^{c''}$. Hence,
$c''\cdot\omega<v''\cdot\omega$ because all $x_i$ with $i\in\supp(c'')$ are weighted equally.
Furthermore, as $\supp(v')=\supp(c')$ and $\bx^{v'}$ divides $\bx^{c'}$ the same argument yields
$v'\cdot\omega<c'\cdot\omega$.
It follows that $v'\cdot\omega<c'\cdot\omega=st=c''\cdot\omega<v''\cdot\omega$ 
contradicting the relation $\bx^{v'}\succ \bx^{v''}$.

Second assume that $\supp(v')\subsetneq\supp(c')$.
Here the same inequality can be established when variables are shifted from $\bx^{v'}$ to $\bx^{v''}$ 
because all entries in~$\omega$ are positive. 
This will also lead to the contradiction that $\bx^{v'}\succ\bx^{v''}$. 

In view of the two cases, it follows that $\alpha=1$.
By the first case this means that $\bx^{v'}=\bx^{c'}$ and therefore $\bx^{c'}-\bx^{c''}=\bx^{v'}-\bx^{v''}$, as required.
\end{proof}

The proof justifies that a binomial of the form $\bx^{c}-1$ is not considered as a circuit.
Indeed, if $\bx^{c}-1$ were the circuit considered in the proof, 
the weight vector $\omega$ introduced there would be $\omega=\mathbf{0}$ 
and so the contradiction $v'\cdot\omega <v''\cdot\omega$ could not be achieved. 

This will be underpinned by the next example, which will show for a specific code 
that there exist primitive binomials $\bx^c-1$ such that $c$ has minimal support and does not belong to the universal Gröbner basis.

For binary codes, however, it can be shown that a binomial of the form $\bx^c-1$ cannot belong to the universal Gröbner basis.
To see this, assume that $\bx^c-1$ lies in some reduced Gröbner basis for an arbitrary monomial order.
Since the basis is reduced, the binomials of the form $\bx^{c'}-\bx^{c''}$, where $c=c'-c''$ and $c''\ne \mathbf{0}$, cannot belong to this Gröbner basis.
But for any such binomial $\bx^{c'}-\bx^{c''}$ there must be a binomial in the reduced Gröbner basis whose leading term divides the leading term of $\bx^{c'}-\bx^{c''}$.
This binomial will then also divide the leading term of $\bx^c-1$ contradicting the reducedness of the basis.
\medskip

In view of binary linear codes, it will be shown that all three sets coincide. 
For non-binary linear codes, however, the next example will illustrate that the inclusions 
can be strict.
\begin{example}\label{ex-2}
In view of the code~$\Co$ from Ex.~\ref{ex-problemFp}, 
computations using the software package {\tt Gfan}~\cite{gfan} exhibits that the set of circuits is 
\begin{align*}
\text{C}_\Co&=\{b-c^6, a-c^3, c^6-b,b^2-c^5,c^3-a,a^2-b,b-a^2,a^3-c^2,\\
&\quad\quad c^2-a^3,a^5-c,c-a^5,c^5-b^2,b^3-c^4,c^4-b^3,b^4-c^3,\\
&\quad\quad b^4-a,c^3-b^4,b^5-c^2,a-b^4,c^2-b^5, b^6-c,c-b^6\}
\end{align*}
and that the universal Gröbner basis for the code ideal $I_\Co$ is indeed a proper superset,
\begin{align*}
\mathcal{U}_\Co= \text{C}_\Co\cup
\{bc-1,a^2c-1,ab^3-1,b^2-ac^2,ab-c^2, b^3-ac, ac^2-b^2,\\
 c^2-ab,ab^2-c,ac-b^3,c-ab^2\}\cup\{a^7-1,b^7-1,c^7-1\}
\end{align*}
Moreover, the universal Gröbner basis $\mathcal{U}_\Co$ properly lies inside the Graver basis $\Gr_\Co$ 
since $ac^4-1$ is a primitive binomial that belongs to $\Gr_\Co$ but not to $\mathcal{U}_\Co$.
To see this, note that the binomial $ac^4-1$ corresponds to the codeword $(1,0,4) = (1,0,4)-(0,0,0)$ and only a
binomial of the form $a^sc^t-1$ with either $s<1$ and $t\leq 4$ or $s\leq1$ and $t<4$ could contradict its being
primitive. But clearly no such codeword exists.
Note additionally that the corresponding codeword $(1,0,4)$ has minimal support.
\hfill $\diamondsuit$
\end{example}

In the following, binary linear codes will only be considered.

\begin{theorem}[\cite{mahkhz3}]\label{thm-binequal}
For a binary linear code~$\Co$ the set of circuits $C_\Co$ equals the Graver basis $Gr_\Co$.
\end{theorem}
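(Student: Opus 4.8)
The plan is to prove the two inclusions $C_\Co\subseteq\Gr_\Co$ and $\Gr_\Co\subseteq C_\Co$ separately. The first is essentially already recorded: every binomial that is a circuit is primitive, and the auxiliary binomials $x_i^2-1$ belong to both sets by convention, so $C_\Co\subseteq\Gr_\Co$. The substance of the theorem is the reverse inclusion, that every primitive binomial of $I_\Co$ is a circuit, and this is where I would put the work.

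So I would fix a primitive binomial $\bx^c-\bx^{c'}\in I_\Co$; if it is one of the $x_i^2-1$ it is a circuit by convention, so I may assume $c'\neq\mathbf{0}$. Write $w=c-c'\in\Co$; since $c,c'$ have disjoint support, $\supp(w)=\supp(c)\sqcup\supp(c')$, and all monomials in sight are squarefree, so that divisibility of such monomials is the same as containment of supports. The goal is to show that $\supp(w)$ is minimal among supports of nonzero codewords, which I would prove contrapositively. Suppose there is a nonzero codeword $w'$ with $\supp(w')\subsetneq\supp(w)$. Since $\supp(w')\subseteq\supp(c)\sqcup\supp(c')$, let $u$ and $u'$ be the squarefree restrictions of $w'$ to $\supp(c)$ and to $\supp(c')$, so that $u-u'=w'\in\Co$ and $\supp(u)\cap\supp(u')=\emptyset$; hence $\bx^u-\bx^{u'}\in I_\Co$, with $\bx^u\mid\bx^c$ and $\bx^{u'}\mid\bx^{c'}$. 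Moreover $\bx^u-\bx^{u'}\neq\bx^c-\bx^{c'}$: equality would force $\supp(c)=\supp(u)\subseteq\supp(w')$ and $\supp(c')=\supp(u')\subseteq\supp(w')$, hence $\supp(w)\subseteq\supp(w')$, contrary to $\supp(w')\subsetneq\supp(w)$. Thus $\bx^c-\bx^{c'}$ is not primitive, a contradiction; so $\supp(w)$ is minimal and the binomial is a circuit.

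The step that requires genuine care — and where the binary hypothesis and the convention excluding $c'=\mathbf{0}$ really come into play — is that in the split $w'=u-u'$ one of $u,u'$ can be $\mathbf{0}$, namely exactly when $\supp(w')$ lies inside a single block of the partition $\supp(c)\sqcup\supp(c')$, so that $\bx^u-\bx^{u'}$ is a binomial of the form $\bx^{(\cdot)}-1$. One must make sure that such a binomial is still a legitimate element of $I_\Co$ that may be used in testing primitivity, even though in the binary setting it is never itself counted among the circuits or primitive binomials; granting this, the argument goes through uniformly. Everything else — that $\bx^u-\bx^{u'}$ really lies in $I_\Co$, is distinct from $\bx^c-\bx^{c'}$, and satisfies the two divisibility relations — reduces to elementary bookkeeping with supports of binary vectors, using $-w'=w'$ in characteristic $2$.
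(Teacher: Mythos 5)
The paper does not prove this theorem at all---it is quoted from~\cite{mahkhz3}---so your argument has to stand on its own, and in substance it does. The two-inclusion structure is right: the direction ``circuit $\Rightarrow$ primitive'' is exactly the remark the paper records after defining circuits for binary codes, and your support-splitting argument for the converse is correct. Restricting a nonzero codeword $w'$ with $\supp(w')\subsetneq\supp(c)\sqcup\supp(c')$ to the blocks $\supp(c)$ and $\supp(c')$ gives a binomial $\bx^{u}-\bx^{u'}\in I_\Co$ whose terms divide $\bx^{c}$ and $\bx^{c'}$ respectively and which differs from $\bx^{c}-\bx^{c'}$, contradicting primitivity; and you are right that witnesses of the form $\bx^{u}-1$ (or $1-\bx^{u'}$) are legitimate here, since the binary exclusion of the case $c'=\mathbf{0}$ only restricts which binomials are \emph{declared} primitive, not which elements of $I_\Co$ may be used to defeat primitivity.

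Two small points you should make explicit. First, you assume from the start that a primitive binomial has squarefree terms with disjoint supports; since the Graver basis is defined as the set of \emph{all} primitive binomials in $I_\Co$, add the one-line justification: if some exponent of $\bx^{c}$ or $\bx^{c'}$ is at least $2$, the binomial $x_i^2-1$ (or $1-x_i^2$) is a witness against primitivity, and if the two terms share a variable, cancelling the common factor yields a witness; hence, apart from the $x_i^2-1$ themselves, primitive binomials are automatically pure with $0/1$ exponents, which is what makes ``divisibility equals containment of supports'' available. Second, your statement that $x_i^2-1$ ``is a circuit by convention'' is your own reading: the paper only declares these binomials primitive by convention and never says they are circuits, so you should either state that convention explicitly for $C_\Co$ or formulate the equality $C_\Co=\Gr_\Co$ up to the trivial binomials $x_i^2-1$; this is a bookkeeping issue, not a mathematical one.
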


Combining Thm.~\ref{thm-binequal} and Prop.~\ref{prop-inclusionp} yields the following important result.
\begin{corollary}\label{cor-uniGBallprim}
For a binary linear code~$\Co$ the universal Gröbner basis $\mathcal{U}_\Co$ of the corresponding code ideal $I_\Co$
consists of all primitive binomials.
\end{corollary}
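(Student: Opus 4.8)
The plan is to obtain Corollary~\ref{cor-uniGBallprim} as an immediate consequence of the two results it cites, so the work is really just chaining the inclusions correctly and checking that nothing is lost for binary codes. Recall that for a binary linear code the Graver basis $\Gr_\Co$ consists of all primitive binomials in $I_\Co$ (by definition, together with the binomials $x_i^2-1$), and that Theorem~\ref{thm-binequal} asserts $\text{C}_\Co=\Gr_\Co$ in the binary case. Feeding this into Proposition~\ref{prop-inclusionp}, which gives $\text{C}_\Co\subseteq\mathcal{U}_\Co\subseteq\Gr_\Co$, forces all three sets to be equal. In particular $\mathcal{U}_\Co=\Gr_\Co$, and since $\Gr_\Co$ is exactly the set of primitive binomials of $I_\Co$, the corollary follows.

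First I would state explicitly the chain $\text{C}_\Co\subseteq\mathcal{U}_\Co\subseteq\Gr_\Co=\text{C}_\Co$, where the first two inclusions are Proposition~\ref{prop-inclusionp} and the final equality is Theorem~\ref{thm-binequal}. From this one reads off $\mathcal{U}_\Co=\Gr_\Co$. Then I would recall that in the binary setting $\Gr_\Co$, by the definition given earlier in the section, is precisely the collection of all primitive binomials lying in $I_\Co$ (the binomials $\bx^c-\bx^{c'}$ with $\supp(c)\cap\supp(c')=\emptyset$, $c'\ne\mathbf 0$, that admit no proper divisor pair, plus the binomials $x_i^2-1$). Substituting this description gives the claimed characterisation of $\mathcal{U}_\Co$.

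There is essentially no obstacle here: the only point worth a sentence of care is the bookkeeping around the degenerate binomials $\bx^c-1$. The text has already argued (in the paragraph following Proposition~\ref{prop-inclusionp}) that for binary codes no binomial of the form $\bx^c-1$ can lie in a reduced Gröbner basis, hence none lies in $\mathcal{U}_\Co$; and correspondingly the definition of circuit in the binary case excludes $c'=\mathbf 0$, so such binomials are not primitive in the relevant sense either. Thus the exclusion is consistent on both sides and the equality $\mathcal{U}_\Co=\Gr_\Co$ is not disturbed by it. I would simply remark that this consistency is exactly why the case $c'=\mathbf 0$ was excluded in the definitions, and conclude that $\mathcal{U}_\Co$ consists of all primitive binomials of $I_\Co$, as asserted.
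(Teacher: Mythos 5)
Your argument is exactly the paper's: the corollary is obtained by combining Proposition~\ref{prop-inclusionp} with Theorem~\ref{thm-binequal} to force $\text{C}_\Co=\mathcal{U}_\Co=\Gr_\Co$, and then invoking the definition of the Graver basis as the set of all primitive binomials. Your added remark on the consistent exclusion of the binomials $\bx^c-1$ matches the paper's own discussion and introduces no discrepancy.
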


In~\cite{sturmfels} the author has shown that the total degree of any primitive binomial in the toric ideal $I_A$ is bounded by
$(n-d)(d+1)D(A)$, where $d\times n$ is the size of the matrix $A$ and $D(A)$ is an integer number depending only on the entries in $A$. 
This result makes use of the estimate $|u_i|\leq D(A)$, $1\leq i\leq n$, for any
circuit $u=(u_1,\dots,u_n)\in \ker(A)$. 
Furthermore, the author has conjectured that an even better estimation holds, 
namely that the total degree is bounded by $(d+1)D(A)$. In the notation of binary linear codes the row size is $d=n-k$
and any entry of a codeword is either $0$ or $1$ and thus $D(A)$ can be chosen to be~1. 
Hence, the proven estimate becomes $k(n-k+1)$.  
However, it will be shown that the bound $n-k+1$ conjectured by the author indeed holds.
Note that this bound corresponds to the Singleton bound on the minimum distance of linear codes which is attained with equality by the maximum distance separable (MDS) codes 
like the Reed-Solomon codes and their extended versions. 
In the binary case, only trivial MDS codes exist~\cite{macws,vlint}.

\begin{proposition}\label{prop-primcode}
Let $\Co$ be a binary $[n,k]$ code. 
If $\bx^{c}-\bx^{c'}\in I_\Co$ is primitive, 
then $\wt(c-c')\leq n-k+1$ and for any generator matrix $G$ of the code $\Co$ the submatrix
$G_{\underline{n}\setminus\supp(c-c')}$ has rank $k-1$.
\end{proposition}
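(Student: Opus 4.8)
The plan is to exploit Corollary~\ref{cor-uniGBallprim}, which says that for a binary code the primitive binomials are exactly the elements of the universal Gröbner basis, together with the explicit shape of the reduced Gröbner bases furnished by Corollary~\ref{cor-generalized}. So let $\bx^c-\bx^{c'}$ be primitive, set $w=c-c'\in\Co$ (a nonzero codeword, since $c'\neq\mathbf 0$ and in the binary case $c\neq\mathbf 0$ as well), and put $S=\supp(w)$, $Z=\underline n\setminus S$. Since $c$ and $c'$ have disjoint support in the binary case, $S=\supp(c)\,\dot\cup\,\supp(c')$, and $\wt(w)=|S|$. The two claims are: $|S|\le n-k+1$, and $\rank G_Z=k-1$ for every generator matrix $G$.

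First I would establish the rank statement, from which the Singleton-type bound will follow. The rank of $G_Z$ is independent of the choice of $G$ (two generator matrices differ by left multiplication with an invertible $k\times k$ matrix, which does not change column ranks). On one hand $\rank G_Z\le k-1$: if $G_Z$ had rank $k$, then $Z$ would contain an information set $J$, and by the characterization of information sets (second item in the list following the definition) there is a unique codeword agreeing with any prescribed value on $J$; applying this to the all-zero value on $J$ forces $w_J=\mathbf 0$ to imply $w=\mathbf 0$, contradicting $w\neq\mathbf 0$ — this is exactly the remark in the excerpt that a code cannot contain an information set at the zero positions of a nonzero codeword. On the other hand $\rank G_Z\ge k-1$: suppose to the contrary $\rank G_Z\le k-2$. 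Then $\rank G_{Z\cup\{i\}}\le k-1$ for any single index $i\in S$, so $\underline n\setminus(Z\cup\{i\})=S\setminus\{i\}$ still has the property that its complement does not contain an information set — equivalently, the coordinates outside $S\setminus\{i\}$ do not determine codewords uniquely, so there is a nonzero codeword $w'$ with $\supp(w')\subseteq S\setminus\{i\}\subsetneq S$. Writing $w'=(w')^+-(w')^-$ (in the binary case this is just splitting the support, and $(w')^-\neq\mathbf 0$ can be arranged unless $w'$ has the excluded form — handle that case by replacing $i$ with a different coordinate of $S$, which is possible as long as $|S|\ge 2$, and $|S|=1$ is trivial), the associated binomial $\bx^{(w')^+}-\bx^{(w')^-}$ lies in $I_\Co$, and both its monomials divide $\bx^c$, $\bx^{c'}$ respectively, since $\supp(w')\subseteq S$; this contradicts primitivity of $\bx^c-\bx^{c'}$. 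Hence $\rank G_Z=k-1$ exactly.

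Given $\rank G_Z=k-1$, the weight bound is immediate: $k-1=\rank G_Z\le |Z|=n-|S|=n-\wt(w)$, whence $\wt(w)=\wt(c-c')\le n-k+1$.

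\medskip

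The step I expect to be the main obstacle is the lower bound $\rank G_Z\ge k-1$, specifically the bookkeeping needed to produce from the hypothesis "$\rank G_Z\le k-2$" a nonzero codeword $w'$ whose support is a \emph{proper} subset of $S$ and which, moreover, admits a binomial representation $\bx^{(w')^+}-\bx^{(w')^-}$ with $(w')^-\neq\mathbf 0$ (so as not to run afoul of the convention that $\bx^{w'}-1$ is not primitive). The cleanest route is probably to pass through the characterization of information sets rather than manipulating $G$ directly: $\rank G_{Z\cup\{i\}}\le k-1$ says $Z\cup\{i\}$ contains no information set, hence the complement $S\setminus\{i\}$ is contained in no information set's complement-free set — more usefully, it says the $k$ columns indexed by any $k$-subset of $Z\cup\{i\}$ are dependent, so the code restricted to the positions $S\setminus\{i\}$ is not all of $\F_2^{|S\setminus\{i\}|}$ only in a weak sense; the robust statement is simply that there is a nonzero $w'\in\Co$ vanishing outside $Z\cup\{i\}$, i.e.\ with $\supp(w')\subseteq(\underline n\setminus S)\cup\{i\}$. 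Intersecting with the freedom to pick $i\in S$, and using $|S|\ge 2$ (the case $|S|\le 1$ being vacuous or trivial for a code with no zero column), one arranges that $w'$ has support meeting $S$ in at most one point and is itself nonzero, so after relabeling one gets $\supp(w')\subsetneq S$, contradicting primitivity as above. The rest is routine.
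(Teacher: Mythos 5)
Your argument is correct, but it takes a genuinely different route from the paper. The paper first invokes Thm.~\ref{thm-binequal} (for binary codes, primitive binomials are exactly the circuits, i.e.\ binomials of codewords with minimal support) and then reads both claims off the circuit characterization: the weight bound comes from circuits being minimally dependent column sets of an $(n-k)\times n$ parity check matrix of rank $n-k$, and the rank condition comes from a kernel-dimension argument on $G_{\underline{n}\setminus\supp(c)}$, where a second kernel vector would produce a codeword with strictly smaller support, contradicting minimality. You bypass Thm.~\ref{thm-binequal} entirely and argue straight from the definition of primitivity: the upper bound $\rank(G_Z)\leq k-1$ via the information-set remark, the lower bound by extracting from $\rank(G_Z)\leq k-2$ a nonzero codeword $w'$ with $\supp(w')\subsetneq\supp(c-c')$ and splitting it along $\supp(c)$ and $\supp(c')$ to contradict primitivity, and then the Singleton-type bound as the one-line consequence $k-1=\rank(G_Z)\leq|Z|=n-\wt(c-c')$. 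What your route buys is self-containedness (no parity check matrix, no appeal to the circuits-equal-Graver theorem, which is exactly the nontrivial input the paper leans on) and a cleaner logical order in which the weight bound is a corollary of the rank condition; what the paper's route buys is brevity, since once primitive $=$ minimal support is granted, both conclusions are short. Incidentally, your opening appeal to Cor.~\ref{cor-uniGBallprim} and Cor.~\ref{cor-generalized} is never actually used; your proof is independent of them.

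One soft spot: your handling of the case $(w')^-=\mathbf{0}$ (i.e.\ $\supp(w')\subseteq\supp(c)$) by ``replacing $i$ with a different coordinate of $S$'' is not a complete argument, since a different choice of $i$ need not produce a codeword meeting $\supp(c')$. The issue is immaterial, though, for two reasons: the paper's definition of primitivity puts no restriction on the witness binomial, so $\bx^{w'}-1\in I_\Co$ already disqualifies $\bx^{c}-\bx^{c'}$ (the exclusion of $c'=\mathbf{0}$ concerns which binomials are classified as primitive, not which may serve as witnesses); and even under the stricter reading one can simply replace $w'$ by $w'+(c-c')$, whose restriction to $\supp(c')$ is $c'\neq\mathbf{0}$ and whose support is still a proper subset of $\supp(c-c')$, or equivalently use the witness $\bx^{c-w'}-\bx^{c'}$. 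With that repair stated explicitly, your proof is complete.
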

\begin{proof}
According to Thm.~\ref{thm-binequal} the primitive binomials of the code ideal $I_{\Co}$ are exactly the circuits. 
The latter are given by sets of minimally dependent column vectors of a parity check matrix for the code. 
Any parity check matrix is of size $(n-k)\times n$ and has rank $n-k$. 
Such a matrix has at most $n-k$ linearly independent columns, 
which implies that the Hamming weight of a circuit is at most $n-k+1$.

To show the second assertion, let $c\in\Co$ be a circuit. 
As $c$ is a codeword there is a non-zero information word $x\in\mathbb{F}_2^k$ with $x\cdot G=c$ for any generator matrix $G$. 
But $x\cdot G_{\underline{n}\setminus\supp(c)}=\mathbf{0}$ and so the matrix $G_{\underline{n}\setminus\supp(c)}$ cannot have maximal rank $k$.
Suppose the rank of $G_{\underline{n}\setminus\supp(c)}$ is smaller than $k-1$. 
By the dimension formula for linear maps,
\begin{align*}
k=\dim \ker G_{\underline{n}\setminus\supp(c)} + \dim {\rm im}\, G_{\underline{n}\setminus\supp(c)}<\dim \ker G_{\underline{n}\setminus\supp(c)}+(k-1)
\end{align*}
and so $ \dim \ker G_{\underline{n}\setminus\supp(c)}>1.$
Thus there must be another information word $x'\in\F_2^k$ with $x'\cdot G_{\underline{n}\setminus\supp(c)}=\mathbf{0}$.
Put $c' = x'\cdot G$.
So for each index $i$ in $\underline{n}\setminus\supp(c)$,  $c'_i=x'\cdot G_{{\{i \}}}=0$ and thus $\supp(c')\subseteq\supp(c)$.
But the encoding is one-to-one and so the codeword $c'$ is distinct from $c$. 
It follows that $\supp(c')\subsetneq\supp(c)$ contradicting the hypothesis that $c$ is a circuit.
Hence the rank of $G_{\underline{n}\setminus\supp(c)}$ must be equal to $k-1$.
\end{proof}

The converse of this assertion also holds.
\begin{proposition}\label{prop-codeprim}
Let $\Co$ be a binary $[n,k]$ code with generator matrix $G$. 
Every binomial in the code ideal $I_\Co$ associated with a codeword $c$ of Hamming weight less than or equal to $n-k+1$ and
such that $G_{\underline{n}\setminus \supp(c)}$ has rank $k-1$ is primitive. 
\end{proposition}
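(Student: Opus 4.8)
The plan is to reduce the statement to a claim about minimal supports and then settle that by a short dimension count, essentially as a converse to the argument in the proof of Proposition~\ref{prop-primcode}. By Theorem~\ref{thm-binequal}, for a binary code the primitive binomials are exactly the circuits, that is, the binomials associated with a codeword whose support is minimal with respect to inclusion (apart from the generators $x_i^2-1$). Since, by definition, every binomial associated with a minimal-support codeword is a circuit, it therefore suffices to prove the following: if $c\in\Co$ and $G_{\underline n\setminus\supp(c)}$ has rank $k-1$, then $\supp(c)$ is minimal among the supports of the nonzero codewords of $\Co$.

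First I would observe that the rank hypothesis already forces $c\ne\mathbf 0$ (otherwise $G_{\underline n\setminus\supp(c)}=G$ has rank $k$) and, since $G_{\underline n\setminus\supp(c)}$ has $n-\wt(c)$ columns, also $n-\wt(c)\ge k-1$; thus the Singleton bound $\wt(c)\le n-k+1$ is in fact automatic once the rank condition is imposed. Write $x\cdot G=c$ for the unique $x\in\F_2^k$ (uniqueness by injectivity of the encoding), which is nonzero. Because $c$ vanishes on every coordinate in $\underline n\setminus\supp(c)$, we get $x\cdot G_{\underline n\setminus\supp(c)}=\mathbf 0$, so $x\in\ker G_{\underline n\setminus\supp(c)}$. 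As this matrix has rank $k-1$, the dimension formula for linear maps gives $\dim\ker G_{\underline n\setminus\supp(c)}=k-(k-1)=1$, hence $\ker G_{\underline n\setminus\supp(c)}=\{\mathbf 0,x\}$.

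Now suppose for contradiction that there is a nonzero codeword $d\in\Co$ with $\supp(d)\subsetneq\supp(c)$, and write $x'\cdot G=d$ with $x'\ne\mathbf 0$. Since $\underline n\setminus\supp(c)\subseteq\underline n\setminus\supp(d)$, the word $d$ vanishes on all coordinates in $\underline n\setminus\supp(c)$, so $x'\cdot G_{\underline n\setminus\supp(c)}=\mathbf 0$ and $x'\in\{\mathbf 0,x\}$. As $x'\ne\mathbf 0$ this forces $x'=x$, whence $d=c$, contradicting $\supp(d)\subsetneq\supp(c)$. Therefore $\supp(c)$ is minimal, and every binomial associated with $c$ — necessarily of the form $\bx^{c'}-\bx^{c''}$ with $c',c''$ partitioning $\supp(c)$ into nonempty parts — is a circuit, hence primitive.

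No genuinely hard step arises: the computation mirrors the one in Proposition~\ref{prop-primcode}, and the only point requiring care is the bookkeeping around definitions — one must pass through Theorem~\ref{thm-binequal} to turn ``primitive'' into ``circuit'', and recall that in the binary setting the representation $\bx^c-1$ is by convention not counted among the binomials associated with $c$. The conceptual content is simply that rank exactly $k-1$ pins the kernel of $G_{\underline n\setminus\supp(c)}$ down to the single nonzero information word producing $c$, which is precisely what rules out a strictly smaller support.
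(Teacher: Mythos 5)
Your proof is correct, but it takes a genuinely different route from the paper. You pass through Theorem~\ref{thm-binequal} (circuits $=$ Graver basis for binary codes) and reduce everything to showing that the rank condition forces $\supp(c)$ to be minimal, which you settle by the kernel-dimension count: $\rank G_{\underline n\setminus\supp(c)}=k-1$ pins $\ker G_{\underline n\setminus\supp(c)}$ down to $\{\mathbf 0,x\}$ with $xG=c$, so no nonzero codeword can have support strictly inside $\supp(c)$. This is the exact mirror of the argument used for Proposition~\ref{prop-primcode}, and it has two pleasant side effects: it shows the weight hypothesis $\wt(c)\le n-k+1$ is actually redundant (it follows from the rank condition, since $G_{\underline n\setminus\supp(c)}$ has $n-\wt(c)\ge k-1$ columns), and it avoids the unproved intermediate claim that primitivity depends only on the codeword, since minimality of $\supp(c)$ directly makes every associated binomial a circuit. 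The paper instead argues constructively: it extends $k-1$ independent columns indexed outside $\supp(c)$ by a column $i\in\supp(c)$ to an information set, invokes Corollary~\ref{cor-generalized} to place $x_i-\bx^{c-e_i}$ in a reduced lexicographic Gröbner basis, concludes primitivity via $\mathcal U_\Co\subseteq\Gr_\Co$, and then transfers primitivity to all binomials with the same underlying codeword. The paper's route exhibits an explicit monomial order and reduced basis containing a binomial for $c$ (so it connects the rank condition to Gröbner-basis membership directly), while yours is the more elementary linear-algebra argument resting on the quoted Theorem~\ref{thm-binequal}; both handle the excluded representation $\bx^c-1$ only by convention, as you correctly note.
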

\begin{proof}
Consider a codeword $c\in\Co$ with Hamming weight $\leq n-k+1$ and
such that $G_{\underline{n}\setminus \supp(c)}$ has rank $k-1$. 
Then $c$ has at least $k-1$ entries that are zero and so by hypothesis, among those one can find exactly $k-1$ coordinates 
$J\subseteq\underline{n}\setminus\supp(c)$ such that $G_J$ has rank $k-1$. 
But as the generator matrix~$G$ has rank $k$ there must be another column in $G$, say indexed by~$i$, 
with $c_i=1$, and such that $G_{J\cup{\{i\}}}$ is a $k\times k$ matrix of rank $k$; 
that is, $J\cup{\{i\}}$ is an information set. 
By Cor.~\ref{cor-generalized}, the binomial $x_i-\bx^{c-e_i}$
belongs to the reduced Gröbner basis for a particular lexicographic order. 
Since every binomial in a reduced Gröbner basis for $I_\Co$ is primitive (see Prop.~\ref{prop-inclusionp}), the binomial $x_i-\bx^{c-e_i}$ is primitive.
But if $\bx^u-\bx^{u'}$ is primitive, then any other binomial $\bx^{v}-\bx^{v'}$ with $u-u'=v-v'$ is also primitive. 
Hence, each binomial associated with the codeword $c$ is primitive, too.
\end{proof}

\begin{theorem}\label{thm-universalbasis}
 Let $\Co$ be a binary $[n,k]$ code. 
The universal Gröbner basis for the corresponding code ideal $I_\Co$ is given by the set
\begin{align*}
 \mathcal{U}_\Co&=\big\{\bx^{c}-\bx^{c'}\left|\right. c-c'\in\Co, \wt(c-c')\leq n-k+1,\\
 &\quad\quad\rank\left(G_{\underline{n}\setminus\supp(c-c')}\right)=k-1 \big\}
\cup\left\{x_i^2-1\mid 1\leq i\leq n \right\}.
\end{align*}
In other words, the universal Gröbner basis for the code ideal consists of all binomials which correspond to codewords that 
satisfy the Singleton bound and a particular rank condition. 
\end{theorem}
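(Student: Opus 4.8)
The plan is to obtain the theorem as an immediate consequence of Corollary~\ref{cor-uniGBallprim}, which identifies $\mathcal{U}_\Co$ with the set of all primitive binomials of $I_\Co$, together with Propositions~\ref{prop-primcode} and~\ref{prop-codeprim}, which between them determine exactly which binomials are primitive. All the substantial work is already contained in those results, so the proof is essentially a matter of matching the set of primitive binomials against the two explicit families appearing on the right-hand side. Write $\mathcal{B}$ for that right-hand side; it then suffices to show that a binomial of $I_\Co$ is primitive if and only if it lies in $\mathcal{B}$.

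For the inclusion $\mathcal{U}_\Co\subseteq\mathcal{B}$, I would take a primitive binomial $\bx^{c}-\bx^{c'}\in I_\Co$ and distinguish the two kinds of primitive binomials permitted by the definition of primitivity for code ideals. If it is one of the distinguished binomials $x_i^2-1$, it lies in the second family of $\mathcal{B}$. Otherwise, apart from those distinguished binomials a binomial of the form $\bx^{c}-1$ is never primitive for a binary code, so $c-c'\neq\mathbf{0}$, and Proposition~\ref{prop-primcode} applies directly: it yields $\wt(c-c')\leq n-k+1$ and $\rank\!\left(G_{\underline{n}\setminus\supp(c-c')}\right)=k-1$, so $\bx^{c}-\bx^{c'}$ lies in the first family of $\mathcal{B}$. (Purity of $\bx^{c}-\bx^{c'}$ is automatic for a primitive binomial, so there is no hidden hypothesis to verify here.)

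For the reverse inclusion $\mathcal{B}\subseteq\mathcal{U}_\Co$, I would again argue family by family. Each $x_i^2-1$ is primitive by definition, hence lies in $\mathcal{U}_\Co$ by Corollary~\ref{cor-uniGBallprim}. For a binomial $\bx^{c}-\bx^{c'}$ in the first family, put $d=c-c'\in\Co$, so that $\wt(d)\leq n-k+1$ and $G_{\underline{n}\setminus\supp(d)}$ has rank $k-1$; in particular $d\neq\mathbf{0}$, since otherwise $\supp(d)=\emptyset$ and that submatrix would be all of $G$, of rank $k$. Proposition~\ref{prop-codeprim} then asserts that every binomial of $I_\Co$ associated with the codeword $d$ is primitive, so $\bx^{c}-\bx^{c'}$ is primitive and therefore belongs to $\mathcal{U}_\Co$. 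Combining the two inclusions gives $\mathcal{U}_\Co=\mathcal{B}$.

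I do not expect a genuine obstacle: the analytic content lives entirely in Propositions~\ref{prop-primcode} and~\ref{prop-codeprim}, and this theorem merely repackages it. The single point that needs care — and the only place where the statement could fail if handled carelessly — is the bookkeeping around the binomials $\bx^{c}-1$: the distinguished ones $x_i^2-1$ must be adjoined to $\mathcal{B}$ explicitly, whereas all other such binomials must be excluded. This is exactly what is guaranteed by the convention in the definition of primitivity that for binary codes the case $c'=\mathbf{0}$ is not counted, together with the remark following Proposition~\ref{prop-inclusionp} that no binomial $\bx^{c}-1$ can occur in a reduced Gröbner basis of a binary code ideal. Once this is observed, nothing further is required.
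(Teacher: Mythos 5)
Your proposal is correct and follows essentially the same route as the paper: combine Corollary~\ref{cor-uniGBallprim} (for binary codes $\mathcal{U}_\Co$ is exactly the set of primitive binomials) with Propositions~\ref{prop-primcode} and~\ref{prop-codeprim}, which characterize the primitive binomials as those associated with codewords satisfying the weight bound and the rank condition. Your extra bookkeeping about the distinguished binomials $x_i^2-1$ and the exclusion of other binomials $\bx^{c}-1$ is sound and merely makes explicit what the paper's terser proof leaves implicit.
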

\begin{proof}
Prop.~\ref{prop-primcode} and~\ref{prop-codeprim} state that a binomial $\bx^c-\bx^{c'}\in I_\Co$ is primitive if and
only if it corresponds to a codeword of Hamming weight $\leq n-k+1$ and such that the submatrix $G_{\underline{n}\setminus\supp(c-c')}$
 of any generator matrix $G$ has rank $k-1$. 
The result follows by applying Cor.~\ref{cor-uniGBallprim}.
\end{proof}

This result gives rise to a new class of binary linear codes whose codewords which fulfill the Singleton bound also satisfy the rank condition.
A binary linear code $\Co$ is called a {\em Singleton code\/} if each non-zero codeword $c$ with Hamming weight $\leq n-k+1$ has the property that 
the submatrix $G_{\underline{n}\setminus \supp(c)}$ has rank $k-1$ for any generator matrix $G$ for~$\Co$. 

If $\Co$ is a Singleton code, then by Thm.~\ref{thm-universalbasis} the corresponding universal Gröbner basis 
can be combinatorially constructed.
For this, note that if $c$ is a codeword with Hamming weight $s$, then there are $2^s-2$ binomials associated with $c$.

\begin{example}
The third binary Hamming code $\Co$ is a $[7,4]$ code with weight distribution $1,0,0,7,7,0,0,1$. 
By inspection, this is a Singleton code, i.e., for any codeword $c$ of Hamming weight $\leq 4$ holds
$\rank(G_{\underline{7}\setminus \supp(c)}) \leq 3$ 
for any generator matrix $G$ for $\Co$.
Thus the universal Gröbner basis $\mathcal{U}_\Co$ consists of all binomials which correspond to the codewords with Hamming weight of at most~4.

Computations using {\tt Gfan}~\cite{gfan} exhibit that the universal Gröbner basis consists of~$147$ binomials 
given by seven binomials of the form $x_i^2-1$, $1\leq i\leq 7$, 
$42=7\cdot (2^3-2)$ binomials corresponding to the seven codewords of Hamming weight~$3$, and 
$98 = 7\cdot (2^4-2)$ binomials associated with the seven codewords of Hamming weight~$4$. 
\hfill $\diamondsuit$
\end{example}

Singleton codes are the parity check codes, 
the MDS codes,
the binary Golay code and its parity check extension, 
the Simplex codes, and the first order Reed-Muller code and its dual. 
On the other hand, not all Hamming codes are Singleton ones.
We will provide more details in an upcoming paper.\medskip

As a final remark, the authors in~\cite{martinez1} have introduced a method for computing the Graver basis for a linear code $\Co$ over $\ZZ_p$, where $p\geq 2$ is an integer, 
which amounts to computing the Gröbner basis of a certain ideal. 
Since in the binary case the Graver basis coincides with the universal Gröbner basis, this provides another method for computing the universal Gröbner basis for a binary linear code.


\bibliographystyle{plain}
\bibliography{refBook,refPaper}

\begin{thebibliography}{10}

\bibitem{adams}
W.~Adams and P.~Loustaunau.
\newblock {\em An Introduction to Gröbner Bases}.
\newblock American Mathematical Society, 1994.

\bibitem{becker}
T.~Becker and V.~Weispfenning.
\newblock {\em Gröbner Bases -- A Computational Approach to Commutative
  Algebra}.
\newblock Springer, 1998.

\bibitem{robbiano}
A.M. Bigatti and L.~Robbiano.
\newblock {Toric ideals}.
\newblock {\em Mathematica Contemporanea}, 21:1--25, 2001.

\bibitem{borges}
M.~Borges-Quintana, M.A. Borges-Trenard, P.~Fitzpatrick, and E.~Martinez-Moro.
\newblock {Gröbner bases and combinatorics for binary codes}.
\newblock {\em AAECC}, 19(5):393--411, 2008.

\bibitem{contitrav-intprogramm}
P.~Conti and C.~Traverso.
\newblock Buchberger algorithm and integer programming.
\newblock In {\em Applied Algebra, Algebraic Algorithms and Error-Correcting
  Codes}, volume 539 of {\em Lecture Notes in Computer Science}, pages
  130--139. Springer Berlin / Heidelberg, 1991.

\bibitem{cooper}
A.B. Cooper.
\newblock {Towards a new method of decoding algebraic codes using Gröbner
  bases}.
\newblock {\em Transactions 10th Army Conf. Appl.\ Math.\ Comp.}, 93:293--297,
  1992.

\bibitem{cls}
D.~Cox, J.~Little, and D.~O'Shea.
\newblock {\em Ideals, Varieties, and Algorithms}.
\newblock Springer, 1996.

\bibitem{fulton}
Fulton.
\newblock {\em Introduction to Toric Varieties}.
\newblock Princeton University Press, 1993.

\bibitem{gfan}
Anders~N. Jensen.
\newblock {G}fan, a software system for {G}r{\"o}bner fans and tropical
  varieties.
\newblock Available at
  \url{http://home.imf.au.dk/jensen/software/gfan/gfan.html}.

\bibitem{macws}
F.J. MacWilliams and N.J.A. Sloane.
\newblock {\em {The Theory of Error-Correcting Codes}}.
\newblock North Holland, New York, 1977.

\bibitem{martinez1}
I.~Marquez-Corbella and E.~Martinez-Moro.
\newblock {Algebraic structure of the minimal support codewords set of some
  linear codes}.
\newblock {\em Advances in Mathematics of Communications}, 5:233--244, 2011.

\bibitem{morarobFan}
T.~Mora and L.~Robbiano.
\newblock {The Gröbner Fan of an Ideal}.
\newblock {\em Journal of Symbolic Computation}, 6:183--208, 1988.

\bibitem{mahkhz4}
M.~Saleemi and K.-H. Zimmermann.
\newblock {Gröbner bases for linear codes}.
\newblock {\em International Journal of Pure and Applied Mathematics},
  62:481--491, 2010.

\bibitem{mahkhz3}
M.~Saleemi and K.-H. Zimmermann.
\newblock {Linear codes as binomial ideals}.
\newblock {\em International Journal of Pure and Applied Mathematics},
  61:147--156, 2010.

\bibitem{sturmfels}
B.~Sturmfels.
\newblock {\em Gröbner Bases and Convex Polytopes}.
\newblock American Mathematical Society, 1996.

\bibitem{vlint}
J.H. van Lint.
\newblock {\em Introduction to Coding Theory}.
\newblock Springer, Berlin, 1999.

\bibitem{weis}
V.~Weispfenning.
\newblock {Constructing universal Gröbner bases}.
\newblock In {\em Applied Algebra, Algebraic Algorithms and Error-Correcting
  Codes (AAECC-5)}, volume 356 of {\em Lecture Notes in Computer Science},
  pages 408--417. Springer, 1987.

\end{thebibliography}

\end{document}